\documentclass[12pt]{amsart}
\usepackage[a4paper,margin=1in]{geometry}
\usepackage{amsmath,amssymb,amsthm}
\usepackage{xcolor}
\usepackage{tikz}
\usetikzlibrary{calc}
\usepackage[colorlinks=true,linkcolor=red,citecolor=blue,urlcolor=blue]{hyperref}

\newtheorem{theorem}{Theorem}[section]
\newtheorem{lemma}[theorem]{Lemma}
\newtheorem{claim}{Claim}

\title{The Borodin--Kostochka Conjecture for Two Classes of Diamond-Free Graphs}
\author{Alireza Fiujlaali}
\email{afiujlaali1@gsu.edu}
\date{}

\begin{document}

\begin{abstract}
We prove the Borodin--Kostochka conjecture for $(\mathrm{diamond},L_4)$-free graphs and $(\mathrm{diamond},K_5^+)$-free graphs, where $L_4$ is obtained from $K_4$ by attaching a path of length two to one clique vertex, and $K_5^+$ is obtained from $K_5$ by adding a vertex adjacent to exactly one clique vertex. The proof uses structural properties of maximum cliques in diamond-free vertex-critical graphs with $\chi=\Delta$.
\end{abstract}

\maketitle

\section{Introduction}

All graphs are finite and simple. For a graph $G$, let $V(G)$ and $E(G)$ denote its vertex and edge sets, and let $\Delta(G)$, $\delta(G)$, $\chi(G)$, and $\omega(G)$ denote its maximum degree, minimum degree, chromatic number, and clique number, respectively. For $X\subseteq V(G)$, let $G[X]$ denote the subgraph induced by $X$. A graph is \textit{$H$-free} if it contains no induced copy of $H$, and is \textit{$k$-vertex-critical} if $\chi(G)=k$ and $\chi(G-v)<k$ for every $v\in V(G)$. A set $Q\subseteq V(G)$ is \textit{dominating} if every vertex outside $Q$ has a neighbor in $Q$. The \textit{diamond} is $K_4-e$. For $t\ge3$, let $L_t$ be obtained from $K_t$ by attaching a path of length two to one clique vertex, and let $K_t^+$ be obtained from $K_t$ by adding one vertex adjacent to exactly one clique vertex.

Brooks' theorem \cite{Brooks} gives the classical bound $\chi(G)\le\Delta(G)$ for connected graphs that are neither complete nor odd cycles. In 1977, Borodin and Kostochka \cite{BK} conjectured that if $\Delta(G)\ge9$, then
$$
\chi(G)\le\max\{\Delta(G)-1,\omega(G)\}.
$$
They proved that every graph with $\chi(G)=\Delta(G)\ge7$ contains a clique of order $\lfloor(\Delta(G)+1)/2\rfloor$. Catlin \cite{Catlin} and Kostochka \cite{Kostochka} independently showed that the conjecture reduces to maximum degree nine; the hereditary-class formulation needed here is recorded by Cranston and Rabern \cite{CR}. Reed \cite{Reed} proved the conjecture for sufficiently large maximum degree, and Cranston and Rabern \cite{CR,CRbig} obtained several equivalent formulations and structural results for graphs with $\chi=\Delta$. More recently, Galindo and McDonald \cite{GM} studied directly the case $\chi=\Delta=9$.

The conjecture has also been verified for several hereditary classes, including claw-free graphs \cite{CRclaw}, $(P_5,C_4)$-free graphs \cite{GP21}, $(P_5,\mathrm{gem})$-free graphs \cite{CLR}, hammer-free graphs \cite{Hammer}, and odd-hole-free graphs \cite{OddHole}. For classes involving the diamond, Goedgebeur, Huang, Ju and Merkel \cite{GHJM} proved $\chi(G)\le\max\{6,\omega(G)\}$ for $(P_6,\mathrm{diamond})$-free graphs, Lan, Zhou and Liu \cite{LZL} proved $\chi(G)\le\max\{3,\omega(G)\}$ for $(P_6,C_4,\mathrm{diamond})$-free graphs, Gupta and Pradhan \cite{GP24} obtained Borodin--Kostochka type bounds for several $P_6$-free classes, and Wu and Yang \cite{WuYang} proved the conjecture for $(\mathrm{bull},\mathrm{diamond})$-free graphs.

Fiujlaali \cite{Fiujlaali} proved the conjecture for $(\mathrm{diamond},F_t)$-free graphs with $\Delta(G)\ge 2t+1$, where $F_t$ is obtained from $K_t$ by adding one pendant vertex to each clique vertex. The class considered there differs from the two classes considered here, since $F_4$ contains neither $L_4$ nor $K_5^+$. Our main result is the following.

\begin{theorem}\label{thm:BK}
Let $G$ be a diamond-free graph with $\Delta(G)\ge9$. If $G$ is $L_4$-free or $K_5^+$-free, then $\chi(G)\le\max\{\Delta(G)-1,\omega(G)\}$.
\end{theorem}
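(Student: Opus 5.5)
We follow the standard reduction for hereditary classes recorded by Cranston and Rabern \cite{CR}. Both classes (diamond-free and $L_4$-free, or diamond-free and $K_5^+$-free) are closed under induced subgraphs, so by the Catlin--Kostochka reduction it suffices to rule out a graph $G$ in the class with $\chi(G)=\Delta(G)=9$ and $\omega(G)\le 8$. We then pass to a $9$-vertex-critical induced subgraph. In this subgraph the maximum degree is still at most $9$ and the chromatic number is $9$. By Brooks' theorem, and since $\omega\le 8$ forbids $K_{10}$, it is therefore a counterexample with $\chi=\Delta=9$ and $\delta\ge 8$. So we assume throughout that $G$ is $9$-vertex-critical with $\chi(G)=\Delta(G)=9$, $\delta(G)\ge 8$ and $\omega(G)\le 8$.

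Next we use the Borodin--Kostochka clique bound: $G$ contains a clique $Q$ of order at least $\lfloor(9+1)/2\rfloor=5$. We take $Q$ to be a maximum clique, so $5\le|Q|=\omega\le 8$. Diamond-freeness gives two structural facts.
\begin{itemize}
\item Every vertex outside $Q$ has at most one neighbor in $Q$.
\item Any two maximal cliques of size at least $3$ share at most one vertex.
\end{itemize}
Each $q\in Q$ has degree at least $8$, so it has at least $9-|Q|\ge 1$ neighbors outside $Q$. These outside neighbors are private: distinct clique vertices have disjoint outside neighborhoods.

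In the $K_5^+$-free case this already finishes the argument. Choose five vertices of $Q$ and let $x$ be an outside neighbor of one of them, say $q$. Then $x$ is adjacent to $q$ and, by the first fact, to no other vertex of $Q$. Hence these five vertices together with $x$ induce $K_5^+$, a contradiction. So in this case the point is simply that $\omega\ge5$ forces $K_5^+$.

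The $L_4$-free case is the main obstacle. An induced $L_4$ needs four clique vertices $a,b,c,d$, a vertex $x\sim d$, and a vertex $y\sim x$ with $y$ nonadjacent to all of $a,b,c,d$. Take $a,b,c,d\in Q$ and $x$ an outside neighbor of $d$. Then $x$ has no other neighbor in $Q$, so $L_4$-freeness forces every neighbor $y\ne d$ of $x$ to be adjacent to one of $a,b,c$. Varying the triple $\{a,b,c\}$ over $Q-d$, which has at least $4$ vertices, each such $y$ must meet every $3$-subset of $Q-d$. By the first fact $y$ has at most one neighbor in $Q$, so this is impossible unless $y\in Q$. But $y\in Q$ would give $x$ a second neighbor in $Q$, again impossible. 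Hence $N(x)=\{d\}$, contradicting $\delta\ge 8$.

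Before finalizing, I would recheck this covering count at $|Q|=5$. There $Q-d$ has exactly $4$ vertices, and a single vertex $y\in Q-d$ misses the $3$-subset avoiding it, so the argument still goes through. If some degenerate configuration does survive, the fallback is a recoloring argument: $9$-color $G-Q$ by criticality, then extend the coloring to $Q$ using that each $q\in Q$ has few outside neighbors and that these outside neighbors are disjoint. This is a Hall-type matching of colors to clique vertices.
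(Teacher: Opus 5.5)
Your reduction to a $9$-vertex-critical $G$ with $\chi=\Delta=9$, $\delta\ge 8$ and $5\le\omega\le 8$, and your $K_5^+$ case, are correct and match the paper. The gap is in the $L_4$-free case, at the step ``$L_4$-freeness forces every neighbor $y\ne d$ of $x$ to be adjacent to one of $a,b,c$.'' An induced $L_4$ on $\{a,b,c,d,x,y\}$ needs $y$ to be nonadjacent to all four clique vertices, including $d$. So $L_4$-freeness only forces $y$ to be adjacent to one of $a,b,c,d$. A vertex $y\sim x$ with $y\sim d$ has no neighbor among $a,b,c$ for any choice of the triple, yet it creates no $L_4$. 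Nothing in your argument excludes such common neighbors of $x$ and $d$, so $N(x)=\{d\}$ does not follow. What your covering argument actually proves is $N(x)\setminus\{d\}\subseteq N(d)$. The covering count at $|Q|=5$ is not where the problem lies.

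The missing ingredient is a bound on $N(x)\cap N(d)$, which uses diamond-freeness a second time; this is what the paper does. The component of $G[N(d)]$ containing $x$ is a clique $C$, and $C\cup\{d\}$ is a clique, so $|C|\le\omega-1\le 7$. Thus $x$ has exactly $|C|\le 7<8\le d(x)$ neighbors in $C\cup\{d\}$. Hence $x$ has a neighbor $y$ outside $C\cup\{d\}$. This $y$ is nonadjacent to $d$, since otherwise it would lie in $C$, and $y\notin Q$. Your covering step then applies: choose $a,b,c\in Q\setminus\{d\}$ avoiding the at most one neighbor of $y$ in $Q$, which is possible since $|Q\setminus\{d\}|\ge 4$, and this gives the induced $L_4$.

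Equivalently, you can finish from what you did prove. Since $N(x)\setminus\{d\}\subseteq N(d)$, the graph $G[N(x)]$ is connected through $d$. By diamond-freeness it is therefore a clique, so $N(x)\cup\{x\}$ is a clique of order at least $9>\omega$.

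The Hall-type recoloring fallback is not an argument as stated. It is also unnecessary once this step is supplied.
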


The proof is based on the following structural theorem for diamond-free critical graphs.

\begin{theorem}\label{thm:main}
Let $G$ be a noncomplete $k$-vertex-critical diamond-free graph with $\chi(G)=\Delta(G)=k$ and $\omega(G)=r\ge4$. Then $G$ contains an induced $K_s^+$ for every $3\le s\le r$ and an induced $L_t$ for every $3\le t\le r-1$. If $G$ is $L_r$-free, then every maximum clique is dominating.
\end{theorem}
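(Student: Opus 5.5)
The whole argument rests on two standard facts. First, a $k$-vertex-critical graph is connected and has $\delta(G)\ge k-1$. Second, in a diamond-free graph the common neighbourhood of any edge is a clique, so a vertex outside a clique $Q$ with $|Q|\ge3$ has at most one neighbour in $Q$. Since $G$ is noncomplete and critical, $G$ contains no $K_k$ (a $K_k$ subgraph would be all of $G$ by criticality). Hence $r\le k-1$, and every clique has at most $k-1$ vertices.

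\emph{Step 1: $K_s^+$.} Fix a maximum clique $Q$ with $|Q|=r$ and a vertex $v\in Q$. Since $\deg(v)\ge k-1\ge r$, the vertex $v$ has a neighbour $u\notin Q$. By diamond-freeness, $u$ has no neighbour in $Q\setminus\{v\}$. For $3\le s\le r$, any $s$-subset $S\subseteq Q$ containing $v$ then gives an induced $K_s^+$ on $S\cup\{u\}$.

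\emph{Step 2: $L_t$.} With $v,u$ as in Step 1, we want a neighbour $w$ of $u$ with $w\ne v$ and $w\not\sim v$. Suppose instead that every vertex of $N(u)\setminus\{v\}$ is adjacent to $v$. Then $N(u)\setminus\{v\}$ lies in the common neighbourhood of the edge $uv$, which is a clique. So $N[u]$ is a clique of size $\deg(u)+1\ge k$, contradicting $\omega\le k-1$. Hence such a $w$ exists, and $w\notin Q$, since otherwise $u$ would have two neighbours in $Q$. By diamond-freeness $w$ has at most one neighbour $q\in Q$, and $q\ne v$. For $3\le t\le r-1$, choose a $t$-subset $T\subseteq Q$ containing $v$ and avoiding $q$; this is possible since $t\le r-1$, and $r\ge4$ makes $t=3$ admissible. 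Then $T\cup\{u,w\}$ induces $L_t$: $u$ sees only $v$ in $T$, $w$ sees only $u$, and $u$ sees no vertex of $T$ other than $v$.

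\emph{Step 3: domination.} Suppose $G$ is $L_r$-free and some maximum clique $Q$ is not dominating. Since $G$ is connected, there is a vertex $w$ at distance exactly $2$ from $Q$, with a path $v u w$ where $v\in Q$. By diamond-freeness, $N(u)\cap Q=\{v\}$, and $w$ has no neighbour in $Q$. Thus $Q\cup\{u,w\}$ induces $L_r$, a contradiction.

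The only step needing a real idea is the existence of $w$ in Step 2. It uses the diamond-free clique-neighbourhood property together with the degree bound $\delta\ge k-1$ and $\omega<k$; everything else is bookkeeping about which vertices of $Q$ are chosen.
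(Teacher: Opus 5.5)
Your proof is correct and follows essentially the same route as the paper. The same neighbour $u$ of a clique vertex gives $K_s^+$. Your observation that $N[u]$ would otherwise be a clique of order $\deg(u)+1\ge k$ is the paper's counting on the component of $G[N(q_1)]$ containing $x$, phrased differently. A vertex at distance exactly two from a non-dominating maximum clique then yields $L_r$, as in the paper.

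One small imprecision: as you state it, the ``at most one neighbour in $Q$'' fact is false for non-maximal cliques, since it needs $Q$ to be a maximal clique. You only ever apply it to a maximum clique, so the argument stands.
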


Theorem~\ref{thm:main} applies to noncomplete diamond-free critical graphs with $\chi=\Delta$ and $\omega\ge4$, without requiring a second forbidden induced graph. When a maximum clique is dominating, we obtain a more precise description of the vertices outside it. In particular, after grouping these vertices according to their unique neighbor in the clique and then into clique components, the resulting auxiliary graph has at least $k-r+2$ vertices, and we characterize the equality case. The classes in Theorem~\ref{thm:BK} are also different from the previously studied classes above. For example, $(\mathrm{diamond},L_4)$-free graphs and $(P_6,\mathrm{diamond})$-free graphs are incomparable; $P_6$ belongs to the former class and $L_4$ belongs to the latter. Moreover, $(\mathrm{diamond},L_4)$-free graphs may contain a bull or a hammer. Thus Theorem~\ref{thm:BK} is not a formal consequence of the results in \cite{GHJM,WuYang,Hammer}.

\begin{figure}[t]
\centering
\begin{tikzpicture}[scale=.63,every node/.style={circle,fill,inner sep=1.25pt}]
\begin{scope}[xshift=0cm,yshift=0cm]
\node (c) at (.5,.35) {}; \node (a) at (-.15,1.05) {}; \node (b) at (1.15,1.05) {}; \node (d) at (.5,-.65) {};
\draw (c)--(a); \draw (c)--(b); \draw (c)--(d);
\node[draw=none,fill=none,rectangle] at (.5,-1.25) {claw};
\end{scope}
\begin{scope}[xshift=3.3cm,yshift=0cm]
\node (p1) at (0,0) {}; \node (p2) at (.7,0) {}; \node (p3) at (1.4,0) {}; \node (p4) at (2.1,0) {}; \node (u) at (1.05,1.0) {};
\draw (p1)--(p2)--(p3)--(p4); \draw (u)--(p1); \draw (u)--(p2); \draw (u)--(p3); \draw (u)--(p4);
\node[draw=none,fill=none,rectangle] at (1.05,-.75) {gem};
\end{scope}
\begin{scope}[xshift=7.0cm,yshift=0cm]
\node (a) at (0,0) {}; \node (b) at (1,0) {}; \node (c) at (.5,.85) {}; \node (d) at (.5,-.85) {};
\draw (a)--(b); \draw (a)--(c)--(b); \draw (a)--(d)--(b);
\node[draw=none,fill=none,rectangle] at (.5,-1.45) {diamond};
\end{scope}
\begin{scope}[xshift=10.2cm,yshift=0cm]
\node (a) at (0,0) {}; \node (b) at (1,0) {}; \node (c) at (.5,.85) {}; \node (x) at (-.65,-.65) {}; \node (y) at (1.65,-.65) {};
\draw (a)--(b)--(c)--(a); \draw (a)--(x); \draw (b)--(y);
\node[draw=none,fill=none,rectangle] at (.5,-1.45) {bull};
\end{scope}
\begin{scope}[xshift=0cm,yshift=-4.0cm]
\node (a) at (0,0) {}; \node (b) at (1,0) {}; \node (c) at (.5,.85) {}; \node (x) at (-.8,0) {}; \node (y) at (-1.55,0) {};
\draw (a)--(b)--(c)--(a); \draw (a)--(x)--(y);
\node[draw=none,fill=none,rectangle] at (.2,-.75) {hammer};
\end{scope}
\begin{scope}[xshift=10.2cm,yshift=-4.0cm]
\node (a) at (0,0) {}; \node (b) at (1,0) {}; \node (c) at (1,.9) {}; \node (d) at (0,.9) {}; \node (x) at (-.8,.45) {};
\draw (a)--(b)--(c)--(d)--(a); \draw (a)--(c); \draw (b)--(d); \draw (x)--(d);
\node[draw=none,fill=none,rectangle] at (.5,-.7) {$K_4^+$};
\end{scope}
\begin{scope}[xshift=5.2cm,yshift=-4.0cm]
\node (a) at (0,0) {}; \node (b) at (1,0) {}; \node (c) at (1,.9) {}; \node (d) at (0,.9) {}; \node (x) at (-.8,.45) {}; \node (y) at (-1.6,.45) {};
\draw (a)--(b)--(c)--(d)--(a); \draw (a)--(c); \draw (b)--(d); \draw (d)--(x)--(y);
\node[draw=none,fill=none,rectangle] at (.5,-.7) {$L_4$};
\end{scope}
\end{tikzpicture}
\caption{Some small graphs appearing in the paper and related results.}
\label{fig:forbidden}
\end{figure}

\section{Preliminaries}\label{sec:prelim}

 For a vertex $v$, let $N(v)$ denote its neighborhood, and for $X\subseteq V(G)$ let $N_X(v)=N(v)\cap X$. A \textit{block} of a graph is a maximal connected subgraph with no cut-vertex. A \textit{clique cutset} is a clique whose deletion disconnects the graph. A class of graphs is \textit{hereditary} if it is closed under taking induced subgraphs. We first state the following results used in the proofs.

\begin{theorem}[Brooks \cite{Brooks}]\label{thm:brooks}
Every graph $G$ with $\Delta(G)\ge3$ satisfies
$$
\chi(G)\le\max\{\omega(G),\Delta(G)\}.
$$
\end{theorem}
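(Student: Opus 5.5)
This is the classical form of Brooks' theorem as quoted in the introduction: a connected graph that is neither complete nor an odd cycle satisfies $\chi\le\Delta$. I would deduce Theorem~\ref{thm:brooks} from that statement by reducing to connected components and then handling the exceptional components. Since $\chi(G)$ is the maximum of $\chi$ over the components of $G$, it suffices to show that every component $H$ satisfies $\chi(H)\le\max\{\omega(G),\Delta(G)\}$.

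Fix a component $H$; note $\Delta(H)\le\Delta(G)$ and $\omega(H)\le\omega(G)$. There are three cases.
\begin{enumerate}
\item If $H$ is complete, then $\chi(H)=|V(H)|=\omega(H)\le\omega(G)$.
\item If $H$ is an odd cycle, then $\chi(H)=3\le\Delta(G)$, because $\Delta(G)\ge3$ by hypothesis.
\item Otherwise $H$ is connected, noncomplete and not an odd cycle. If $\Delta(H)\le2$, then $H$ is a path or an even cycle, so $\chi(H)\le2<\Delta(G)$. If $\Delta(H)\ge3$, Brooks' theorem gives $\chi(H)\le\Delta(H)\le\Delta(G)$.
\end{enumerate}
This is the only place where $\Delta(G)\ge3$ is used: it absorbs the odd-cycle exception.

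If one wants the argument to be self-contained, I would prove the connected statement for $\Delta=\Delta(H)\ge3$, $H$ noncomplete, using Lovász's greedy ordering. If $H$ is not $\Delta$-regular, order the vertices by nonincreasing distance from a vertex $v$ of degree less than $\Delta$, ending at $v$. Each vertex other than $v$ then has a later neighbour, so greedy colouring uses at most $\Delta$ colours, and $v$ itself has fewer than $\Delta$ neighbours.

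If $H$ is $\Delta$-regular, the key step is to find a vertex $v$ with two nonadjacent neighbours $a,b$ such that $H-\{a,b\}$ is connected. Colour $a$ and $b$ first with the same colour, then greedily colour $H-\{a,b\}$ in nonincreasing distance from $v$, ending at $v$. Each vertex before $v$ has an uncoloured later neighbour, and $v$ sees the repeated colour on $a$ and $b$, so at most $\Delta-1$ colours appear on its neighbours.

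The expected obstacle is exactly finding $a,v,b$. I would split on connectivity.
\begin{itemize}
\item If $H$ is $3$-connected, take any vertex $v$ whose neighbourhood is not a clique; such a $v$ exists because $H$ is noncomplete and connected. Let $a,b$ be two nonadjacent neighbours of $v$. Deleting two vertices from a $3$-connected graph leaves it connected.
\item If $H$ has a cut-vertex, or a $2$-cut $\{v,w\}$ (choose $v$ so that $H-v$ is still connected), take $v$ together with neighbours $a,b$ lying in two different end-blocks of $H-v$, and not cut-vertices there. Then $a$ and $b$ are nonadjacent, and $H-\{a,b\}$ stays connected because $v$ has degree $\Delta\ge3$ and so retains another neighbour.
\end{itemize}
Once this triple is secured, the greedy count is routine.
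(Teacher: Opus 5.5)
Your deduction of the $\max\{\omega,\Delta\}$ form from the classical connected form of Brooks' theorem is correct, and it is all that is needed. The paper gives no proof of this statement and simply cites Brooks. Your component-by-component case split is exactly the right bridge from the version quoted in the introduction: complete components are bounded by $\omega(G)$, odd cycles are absorbed by $\Delta(G)\ge3$, and everything else is handled by the connected theorem.

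The optional self-contained proof, however, has a genuine gap in the regular case when $H$ has a cut-vertex. You assert that $v$ has neighbours in the interiors of two different end-blocks of $H-v$. This is justified only when $H$ is $2$-connected: if $v$ missed the interior of an end-block of $H-v$, the cut-vertex of that block would be a cut-vertex of $H$. When $H$ itself has a cut-vertex, some end-blocks of $H-v$ are end-blocks of $H$ far from $v$, and the claim fails.

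In fact the required triple need not exist at all. Take two copies of $K_4$ on $\{p_i,q_i,s_i,t_i\}$ for $i=1,2$. Subdivide the edge $p_iq_i$ by a new vertex $w_i$, and add the edge $w_1w_2$. The result is a connected, cubic, noncomplete graph.
\begin{itemize}
\item Any pair $\{a,b\}$ with $H-\{a,b\}$ connected must avoid the cut-vertices $w_1,w_2$.
\item Among the remaining vertices, the only nonadjacent pairs with a common neighbour are $\{p_i,q_i\}$.
\item Deleting $p_i$ and $q_i$ leaves $\{s_i,t_i\}$ as a separate component.
\end{itemize}
So no choice of $v$ can work.

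The standard repair, as in Lov\'asz's proof and West's presentation, treats a cut-vertex $x$ separately. Each subgraph consisting of a component of $H-x$ together with $x$ is connected, and $x$ has degree less than $\Delta$ in it. Your non-regular greedy argument therefore $\Delta$-colours each such subgraph. Permuting colours makes these colourings agree at $x$, and their union is a proper colouring of $H$.

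You then need the triple only for $2$-connected $H$. There your $3$-connected case works as written. In the $2$-cut case, $v$ is taken from a $2$-cut, so $H-v$ is connected with a cut-vertex. The end-block claim then holds by the $2$-connectivity argument above, which you should state explicitly.
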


\begin{theorem}[Borodin--Kostochka \cite{BK}]\label{thm:BKclique}
Every graph $G$ with $\chi(G)=\Delta(G)\ge7$ contains $K_{\lfloor(\Delta(G)+1)/2\rfloor}$.
\end{theorem}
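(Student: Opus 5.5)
The plan is to deduce the statement from Brooks' theorem (Theorem~\ref{thm:brooks}) applied separately to the two parts of a suitable vertex partition of $G$. The partition is the classical one of Lov\'asz, and it is the only place where a real argument is needed. Write $\Delta=\Delta(G)\ge7$ and fix the integers
$$
a=\lfloor(\Delta-1)/2\rfloor,\qquad b=\lceil(\Delta-1)/2\rceil .
$$
Then $a+b=\Delta-1$, $b\ge a$, $a+1=\lfloor(\Delta+1)/2\rfloor$, and $a\ge3$ because $\Delta\ge7$.

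\textbf{Step 1 (partition).} I would show that $V(G)$ has a partition $V_1\cup V_2$ with $\Delta(G[V_1])\le a$ and $\Delta(G[V_2])\le b$. Among all partitions, choose one minimizing
$$
f(V_1,V_2)=b\,e(G[V_1])+a\,e(G[V_2]).
$$
Suppose some $v\in V_1$ has $d_1:=|N_{V_1}(v)|\ge a+1$. Then $d_2:=|N_{V_2}(v)|\le\Delta-a-1=b$. Moving $v$ to $V_2$ changes $f$ by
$$
-b\,d_1+a\,d_2\le -b(a+1)+ab=-b<0,
$$
which contradicts minimality. Symmetrically, suppose $v\in V_2$ has $|N_{V_2}(v)|\ge b+1$. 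Then $|N_{V_1}(v)|\le a$, and moving $v$ to $V_1$ changes $f$ by at most $-a(b+1)+ab=-a<0$, again a contradiction. I expect this step to be the main obstacle. The point is choosing the weights of the potential function so that both kinds of move strictly decrease it; the weights $b$ and $a$ above do exactly this.

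\textbf{Step 2 (colour each part).} Let $H_1=G[V_1]$; its maximum degree is at most $a$. If $\Delta(H_1)\le a-1$, greedy colouring gives $\chi(H_1)\le a$. If $\Delta(H_1)=a\ge3$, Theorem~\ref{thm:brooks} gives $\chi(H_1)\le\max\{\omega(H_1),a\}$. Hence either $\chi(H_1)\le a$ or $\omega(H_1)\ge a+1$. The same argument for $H_2=G[V_2]$, using $b\ge a\ge3$, gives either $\chi(H_2)\le b$ or $\omega(H_2)\ge b+1\ge a+1$.

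\textbf{Step 3 (conclude).} If both colourings exist, use disjoint palettes on $V_1$ and $V_2$. This gives $\chi(G)\le a+b=\Delta-1$, contradicting $\chi(G)=\Delta$. Therefore one of the parts contains a clique of order at least $a+1=\lfloor(\Delta+1)/2\rfloor$, and this clique is also a clique of $G$. This proves the statement.

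The hypothesis $\Delta\ge7$ enters only to guarantee $a\ge3$, which is needed to apply Theorem~\ref{thm:brooks} to the parts.
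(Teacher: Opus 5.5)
Your argument is correct. The weighted potential $b\,e(G[V_1])+a\,e(G[V_2])$ does produce the Lov\'asz partition with $\Delta(G[V_1])\le a$, $\Delta(G[V_2])\le b$ and $a+b=\Delta-1$, and applying Brooks' theorem to the two parts (where $a\ge 3$ rules out the odd-cycle case) finishes the proof.

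The paper itself gives no proof of this statement; it only quotes it from Borodin and Kostochka. Your Lov\'asz-partition-plus-Brooks argument is the classical proof of this result, and it actually uses only $\chi(G)\ge\Delta(G)$.
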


\begin{lemma}[Catlin--Kostochka \cite{Catlin,Kostochka,CR}]\label{lem:reduction}
Let $\mathcal H$ be a hereditary class of graphs and let $k\ge5$. If $\mathcal H$ contains no $k$-vertex-critical graph $G$ with $\chi(G)=\Delta(G)=k$ and $\omega(G)<k$, then $\mathcal H$ contains no $(k+1)$-vertex-critical graph $G$ with $\chi(G)=\Delta(G)=k+1$ and $\omega(G)<k+1$. In particular, to prove the Borodin--Kostochka conjecture for a hereditary class, it is enough to prove it when $\Delta=9$.
\end{lemma}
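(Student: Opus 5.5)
The plan is to argue by contraposition on a minimal counterexample. Suppose $G\in\mathcal H$ is $(k+1)$-vertex-critical with $\chi(G)=\Delta(G)=k+1$ and $\omega(G)\le k$. I will find an independent set $I$ such that $G-I$ has $\chi=k$, $\Delta\le k$ and $\omega\le k-1$. I will then pass to a $k$-vertex-critical induced subgraph of $G-I$, which lies in $\mathcal H$ by heredity, and derive a contradiction with the hypothesis.

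\emph{Step 1 (the hitting set).} I want an independent set $I_0$ meeting every copy of $K_k$ in $G$. First I control how such cliques overlap. Let $A,B$ be distinct $k$-cliques sharing $t\ge1$ vertices, and let $v\in A\cap B$. Then $\deg v\ge 2k-t-1$, and $\Delta=k+1$ gives $t\ge k-2$. If $|A\cap B|=k-1$, then $A\cup B$ has $k+1$ vertices, and it is not a clique because $\omega\le k$. So the $k$-cliques group into clusters: maximal families connected by intersection. Using $k\ge5$ and the degree bound, each cluster has a small union, of size at most about $k+2$. I then want to show that each cluster contains a vertex lying in all of its $k$-cliques, or at least that a single vertex per cluster can be chosen to hit all of them. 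I also want these chosen vertices to be pairwise nonadjacent across clusters. A vertex in a cluster already has about $k-1$ neighbours inside it, so it has at most two or three outside. This local freedom, together with a choice argument over the clusters (for instance a greedy choice, or Hall-type reasoning), should produce an independent transversal. I expect this to be the main obstacle. The case analysis of how $k$-cliques can overlap when $\Delta=k+1$ is where $k\ge5$ is genuinely needed, and I would first try to prove that every cluster has a common vertex, falling back to two-vertex options per cluster if it does not.

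\emph{Step 2 (reduce).} Extend $I_0$ to a maximal independent set $I$. Every vertex outside $I$ then has a neighbour in $I$, so $\Delta(G-I)\le k$. By the choice of $I_0$ we also have $\omega(G-I)\le k-1$. If $\chi(G-I)\le k-1$, colouring $I$ with one new colour gives $\chi(G)\le k$, a contradiction. On the other hand, $G-I$ is contained in $G-v$ for any $v\in I$, and $\chi(G-v)=k$ by criticality. Hence $\chi(G-I)=k$.

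\emph{Step 3 (critical subgraph).} Take a $k$-vertex-critical induced subgraph $G'$ of $G-I$; it lies in $\mathcal H$. We have $\omega(G')\le k-1$ and $\Delta(G')\le k$. Theorem~\ref{thm:brooks} applies since $\Delta(G')\ge\delta(G')\ge k-1\ge3$, and it gives $k=\chi(G')\le\max\{\omega(G'),\Delta(G')\}$, which forces $\Delta(G')=k$. So $G'$ is a forbidden $k$-vertex-critical graph with $\chi=\Delta=k$ and $\omega<k$, a contradiction.

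For the ``in particular'' part, first let $G$ be a counterexample to the conjecture in $\mathcal H$ with $\Delta(G)=\Delta\ge9$. Theorem~\ref{thm:brooks} forces $\chi(G)=\Delta$ and $\omega(G)<\Delta$. Passing to a vertex-critical induced subgraph $G''$ keeps $\chi(G'')=\Delta$ and $\Delta(G'')\le\Delta$. Theorem~\ref{thm:brooks} again gives $\Delta(G'')=\Delta$, because $\omega(G'')\le\omega(G)<\Delta$. Now suppose the conjecture holds for all graphs in $\mathcal H$ with $\Delta=9$. Then there is no $9$-vertex-critical graph in $\mathcal H$ with $\chi=\Delta=9$ and $\omega<9$. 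Applying the first part repeatedly for $k=9,10,\dots$ rules out such critical graphs for every $\Delta\ge9$, so no counterexample exists.
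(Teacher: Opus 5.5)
The paper does not prove this lemma; it quotes it from Catlin, Kostochka and Cranston--Rabern. Your Steps 2 and 3 and your derivation of the ``in particular'' clause are correct. Your skeleton is also the standard reduction: delete a maximal independent set meeting every $K_k$, pass to a $k$-critical subgraph, and apply Brooks. The gap is Step 1, which is the entire substance of the lemma and which you only describe as a hope.

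\textbf{Common vertex in each cluster.} You must show that each cluster has a vertex lying in all of its $k$-cliques, and your overlap bound only gets you partway. For $k\ge5$, the bounds $|A\cap B|,|B\cap C|\ge k-2$ force $|A\cap C|\ge k-4\ge1$, so a cluster is pairwise intersecting. Each vertex of a clique $A$ has at most two neighbours outside $A$, so at most $2k$ edges leave $A$. Every vertex of the cluster's union $U$ outside $A$ has at least $k-2$ neighbours in $A$, so there are at most three such vertices. Hence $|U|\le k+3<2k$. To get from this to a common vertex you still need a further tool, for instance Hajnal's lemma $|\bigcup\mathcal C|+|\bigcap\mathcal C|\ge 2\omega$ for a family $\mathcal C$ of maximum cliques, or a direct argument. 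Your proposal supplies neither.

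\textbf{Nonadjacent representatives across clusters.} This is the more serious gap, because the tools you suggest do not work here. Hall's theorem produces distinct representatives, not pairwise nonadjacent ones. A greedy choice can fail: a core vertex may have two neighbours in other cores, so enough earlier representatives can block an entire core.

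What the situation actually gives is an independent-transversal problem. A core vertex $v$ of a cluster with union $U$ is adjacent to all of $U-v$. Hence $|U|\le k+2$, and $v$ has at most $k+2-|U|\le2$ neighbours outside $U$. By Hajnal, the core has at least $2k-|U|$ vertices.
\begin{itemize}
\item Cores with $|U|=k+2$ have no outside neighbours, so any vertex of such a core can be chosen.
\item Every other core has at least $k-1\ge4$ vertices, each with at most two neighbours in other cores.
\end{itemize}
Haxell's theorem then finishes: parts of size at least $2d$ in a graph of maximum degree $d$ admit an independent transversal.

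Alternatively, you can cite a hitting-set theorem directly. Rabern's theorem requires $\omega\ge\frac34(\Delta+1)$. King's theorem requires $\omega>\frac23(\Delta+1)$, which for $\omega=\Delta-1=k$ is exactly $k\ge5$. Without some such ingredient, Step 1, and with it the lemma, remains unproved.
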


\begin{lemma}[Gallai \cite{Gallai}]\label{lem:gallai}
In a $k$-vertex-critical graph, the subgraph induced by the vertices of degree $k-1$ is a Gallai forest; that is, every block is a clique or an odd cycle.
\end{lemma}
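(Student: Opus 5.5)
We prove the stronger statement that every connected component $H$ of the subgraph induced by the low vertices is a Gallai tree. The approach is to reduce this to degree-choosability of $H$ and then invoke the Erdős--Rubin--Taylor/Borodin characterization of degree-choosable graphs, whose proof we sketch.

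\textbf{Reduction to list colouring.} Let $G$ be $k$-vertex-critical and let $L$ be the set of vertices of degree $k-1$. Fix a component $H$ of $G[L]$.
\begin{enumerate}
\item By criticality, $G-V(H)$ has a proper colouring $\varphi$ with colours $\{1,\dots,k-1\}$. If $V(H)=V(G)$, take $\varphi$ to be empty.
\item For $v\in V(H)$, let $\Lambda(v)$ be the set of colours in $\{1,\dots,k-1\}$ not used by $\varphi$ on $N(v)\setminus V(H)$.
\item Since $\deg_G(v)=k-1$, we get $|\Lambda(v)|\ge k-1-(\deg_G(v)-\deg_H(v))=\deg_H(v)$.
\item If $H$ had a proper colouring choosing $c(v)\in\Lambda(v)$ for every $v$, then $\varphi\cup c$ would be a proper $(k-1)$-colouring of $G$, contradicting $\chi(G)=k$.
\end{enumerate}
So $H$ is a connected graph that is not $\Lambda$-colourable, although $|\Lambda(v)|\ge\deg_H(v)$ for all $v$. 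It therefore suffices to show that such a graph is a Gallai tree, meaning every block is a clique or an odd cycle.

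\textbf{Degree-choosability.} Suppose $H$ is connected and $|\Lambda(v)|\ge\deg_H(v)$ for all $v$.
\begin{enumerate}
\item If some vertex $u$ has $|\Lambda(u)|>\deg_H(u)$, order the vertices by decreasing distance from $u$ and colour greedily. Each vertex other than $u$ has an uncoloured neighbour closer to $u$ when it is reached, so it has a free colour. The vertex $u$ is coloured last and also has a free colour.
\item The same argument works if two adjacent vertices $x,y$ have $\Lambda(x)\ne\Lambda(y)$. Pick $\alpha\in\Lambda(x)\setminus\Lambda(y)$ and colour $x$ with $\alpha$. In $H-x$, the vertex $y$ still has at least $\deg_{H-x}(y)+1$ available colours, since $\alpha$ was never available to $y$. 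Every other vertex still has at least its new degree in available colours. Apply the previous step to each component of $H-x$; each component contains a neighbour of $x$ that gains slack.
\end{enumerate}
Hence we may assume $|\Lambda(v)|=\deg_H(v)$ for all $v$ and that lists are constant on each edge, so constant on every block.

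\textbf{The structural step (main obstacle).} Suppose some block $B$ is neither complete nor an odd cycle. A standard lemma (Rubin's block lemma) says that a $2$-connected graph which is neither complete nor an odd cycle contains an induced even cycle, or an induced even cycle with exactly one chord. This yields vertices $a,b,c$ with $a,b\in N(c)$, $ab\notin E$, and $B-\{a,b\}$ connected.
\begin{itemize}
\item If $\deg_H(c)\ge 3$, colour $a$ and $b$ with the same colour from their common list. Then colour the rest greedily towards $c$, as in the first step. Vertices of $H-\{a,b\}$ in other blocks attach through cut vertices, and those cut vertices gain slack.
\item If $\deg_H(c)=2$, the cycle structure must be handled directly. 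One colours the even cycle, or the theta-like configuration, first using the equal-list structure, and then extends greedily.
\end{itemize}
Either way we get a $\Lambda$-colouring, a contradiction. This case analysis, that is, locating the configuration and checking that the greedy extension never gets stuck across cut vertices, is the main technical obstacle. It is the classical content of the Erdős--Rubin--Taylor and Borodin theorem.

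Therefore every block of $H$ is a clique or an odd cycle. Since $H$ was an arbitrary component of $G[L]$, the subgraph $G[L]$ is a Gallai forest.
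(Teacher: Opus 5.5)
Your reduction to list colouring is correct, and it is the standard modern route to this lemma; the paper itself gives no proof and only cites Gallai. A $(k-1)$-colouring of $G-V(H)$ leaves lists with $|\Lambda(v)|\ge\deg_H(v)$ from which $H$ cannot be coloured, so the Borodin/Erd\H{o}s--Rubin--Taylor theorem would finish the job. Your first slack step is also fine.

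The gap is in your sketch of that theorem, in the second step. After you colour $x$ with $\alpha\in\Lambda(x)\setminus\Lambda(y)$, only the component of $H-x$ containing $y$ is guaranteed slack. A neighbour $w$ of $x$ with $\alpha\in\Lambda(w)$ loses one colour and one degree, so a component of $H-x$ avoiding $y$ may have no slack at all. The conclusion that lists are constant on every block is in fact false. Let $H$ be two triangles $vab$ and $vcd$ sharing $v$, with $\Lambda(a)=\Lambda(b)=\{1,2\}$, $\Lambda(c)=\Lambda(d)=\{3,4\}$ and $\Lambda(v)=\{1,2,3,4\}$. Then $|\Lambda(u)|=\deg_H(u)$ for every $u$, and $H$ is not $\Lambda$-colourable: whatever colour $v$ gets, one triangle is left with two adjacent vertices whose lists have shrunk to the same single colour. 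Yet $\Lambda(v)\ne\Lambda(a)$. Running your argument with $x=v$, $y=a$, $\alpha=3$ gets stuck in the component $\{c,d\}$, which has no slack.

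Your structural step is built on this false claim, and has further problems.
\begin{itemize}
\item It colours $a$ and $b$ from a common list, which relies on the constant-lists conclusion.
\item Once cut vertices are genuinely present, the claim that they ``gain slack'' is unjustified. If $a$ is a cut vertex of $H$, the blocks hanging at $a$ are separated from $c$ in $H-\{a,b\}$, and their vertices adjacent to $a$ simply lose a colour.
\item An induced even cycle does not by itself yield $a,b,c$ with $B-\{a,b\}$ connected; no such triple exists in $B=C_4$.
\item The case $\deg_H(c)=2$ is left undone.
\end{itemize}

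\textbf{Repair.} The standard argument needs neither constant lists nor cut-vertex bookkeeping.
\begin{enumerate}
\item If some block is neither complete nor an odd cycle, Rubin's block lemma gives an induced subgraph $F$ of $H$ that is an even cycle with at most one chord.
\item Colour $H-V(F)$ greedily in order of decreasing distance from $F$. Each vertex, when reached, still has an uncoloured neighbour closer to $F$, so a colour is free. Afterwards every $v\in V(F)$ retains at least $\deg_F(v)$ colours, because $F$ is induced.
\item Trim the remaining lists to size exactly $\deg_F(v)$ and check directly that $F$ is colourable.
\begin{itemize}
\item Chordless even cycle: either all lists agree and you $2$-colour. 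Otherwise colour an endpoint $x$ of an edge $xy$ with unequal lists using a colour missing from the list of $y$, and finish greedily along the path $F-x$ towards $y$.
\item Even cycle with one chord: colour a chord endpoint $u$ with a colour from its $3$-list missing from the $2$-list of a cycle neighbour $y$. Then finish greedily along the path $F-u$ towards $y$.
\end{itemize}
\end{enumerate}
Alternatively, simply cite the Borodin/Erd\H{o}s--Rubin--Taylor theorem after your reduction.
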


We will also use a few elementary consequences of vertex-criticality. Every graph with chromatic number $k$ contains a $k$-vertex-critical induced subgraph, obtained by deleting vertices as long as the chromatic number remains $k$. If $G$ is $k$-vertex-critical, then $G$ is connected and $\delta(G)\ge k-1$; otherwise a $(k-1)$-coloring of a proper component or of $G-v$ extends to all of $G$. A vertex-critical graph also has no clique cutset. Indeed, if a clique $S$ separates a $k$-vertex-critical graph, the proper induced subgraphs obtained from two sides of $G-S$ together with $S$ are both $(k-1)$-colorable. Since $S$ is a clique, the colors on one side can be permuted so that the two colorings agree on $S$, giving a $(k-1)$-coloring of $G$.

\begin{lemma}\label{lem:diamond}
Let $G$ be diamond-free. Every component of $G[N(v)]$ is a clique. Moreover, a vertex outside a maximum clique of order at least three has at most one neighbor in that clique.
\end{lemma}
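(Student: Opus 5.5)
Both assertions follow from the definition of the diamond ($K_4$ minus an edge) by checking which configuration would appear. We work directly with induced subgraphs on four vertices and use no earlier results.

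\emph{First assertion.} Fix a vertex $v$ and suppose some component $C$ of $G[N(v)]$ is not a clique.

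\begin{itemize}
\item A connected graph that is not complete contains an induced path on three vertices. Concretely, take two nonadjacent vertices $x,z\in C$ at distance two in $C$, and let $y\in C$ be a common neighbor of $x$ and $z$.
\item Then $x,y,z$ form an induced path $xyz$, and $v$ is adjacent to all three.
\item So $\{v,x,y,z\}$ spans exactly five edges: $vx,vy,vz,xy,yz$, with $xz$ missing.
\item This is an induced diamond, a contradiction. Hence every component of $G[N(v)]$ is a clique.
\end{itemize}

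\emph{Second assertion.} Let $Q$ be a maximum clique with $|Q|\ge3$, and let $u\notin Q$. Suppose $u$ has two neighbors $a,b\in Q$.

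\begin{itemize}
\item Since $Q$ is a maximum clique and $u\notin Q$, the set $Q\cup\{u\}$ is not a clique. So there is some $c\in Q$ with $uc\notin E(G)$; necessarily $c\ne a,b$.
\item The set $\{u,a,b,c\}$ contains the edges $ua,ub,ab,ac,bc$ and lacks the edge $uc$.
\item This is an induced diamond, a contradiction.
\end{itemize}

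Therefore $u$ has at most one neighbor in $Q$. The hypothesis $|Q|\ge3$ is used only for the existence of a third vertex $c$; it is automatic once $u$ has two neighbors in $Q$ and $Q\cup\{u\}$ is not a clique.

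The only step needing care is the existence of the induced $P_3$ inside a connected noncomplete component. This is standard: take two nonadjacent vertices at minimum distance in $C$; that distance is exactly two, and the middle vertex of a shortest path between them serves as $y$.
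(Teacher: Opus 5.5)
Your proposal is correct and follows essentially the same route as the paper: an induced $P_3$ inside a noncomplete component of $G[N(v)]$, together with $v$, yields a diamond, and a vertex with two neighbors $a,b$ in a maximum clique, together with a clique vertex $c$ it misses, induces a diamond on $\{u,a,b,c\}$. Your side remark is also right: the hypothesis $|Q|\ge3$ is not needed for the second assertion, since maximality of $Q$ already forces the third vertex $c$ to exist.
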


\begin{proof}
Suppose first that some component of $G[N(v)]$ is not a clique. Then it contains two nonadjacent vertices joined by a shortest path inside $G[N(v)]$. The first three vertices of this path induce a $P_3$. Since all three are neighbors of $v$, adding $v$ gives a diamond, a contradiction. Thus every component of $G[N(v)]$ is a clique.

Now let $Q$ be a maximum clique of order at least three, and suppose that a vertex $x\notin Q$ has two neighbors $q_i,q_j\in Q$. The vertex $x$ cannot be adjacent to every vertex of $Q$, since otherwise $Q\cup\{x\}$ would be a larger clique. Hence $x$ misses some $q_\ell\in Q$. The three vertices $q_i,q_j,q_\ell$ form a triangle, while $x$ is adjacent to $q_i$ and $q_j$ but not to $q_\ell$. Therefore $x,q_i,q_j,q_\ell$ induce a diamond, again a contradiction.
\end{proof}

For the remainder of this section, let $G$ be a noncomplete $k$-vertex-critical diamond-free graph with $\chi(G)=\Delta(G)=k$ and $\omega(G)=r\ge4$, and let $Q=\{q_1,\ldots,q_r\}$ be a dominating maximum clique. Since $G$ is noncomplete and $k$-vertex-critical, it contains no $K_k$, and hence $r\le k-1$. Since every vertex has degree at least $k-1$, each $q_i$ has a neighbor outside $Q$. By Lemma~\ref{lem:diamond}, every vertex outside $Q$ has exactly one neighbor in $Q$. For $1\le i\le r$, let $R_i$ consist of the vertices outside $Q$ whose neighbor in $Q$ is $q_i$. Thus $R_1,\ldots,R_r$ are nonempty and partition $V(G)\setminus Q$. Let $H$ be the graph whose vertices are the components of the graphs $G[R_i]$, with two components adjacent in $H$ whenever an edge of $G$ joins them.

\begin{lemma}\label{lem:structure}
The following hold.
\begin{enumerate}
\item $H$ is connected.
\item Every $C\in V(H)$ is a clique of order at most $r-1$, and the edges of $G$ between any two components of $H$ form a matching.
\item Every $C\in V(H)$ satisfies $d_H(C)\ge k-1-|C|$.
\end{enumerate}
\end{lemma}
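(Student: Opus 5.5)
We prove the three parts in the order (2), (3), (1). Parts (2) and (3) use only diamond-freeness and the degree bounds; part (1) uses the absence of clique cutsets.

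For (2), fix $C\in V(H)$, a component of $G[R_i]$. Every vertex of $C$ is adjacent to $q_i$, so $C$ lies inside a connected subgraph of $G[N(q_i)]$. By Lemma~\ref{lem:diamond}, each component of $G[N(q_i)]$ is a clique, so $C$ is a clique. Then $C\cup\{q_i\}$ is a clique, so $|C|+1\le r$, that is, $|C|\le r-1$.

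For the matching claim, take distinct components $C,C'$. Suppose $x\in C$ has two neighbors $y,y'\in C'$. Then $y,y'$ are adjacent, since $C'$ is a clique. If $C'\subseteq R_j$, then $\{x,y,y',q_j\}$ is a diamond unless $x$ is adjacent to $q_j$. If $C'$ lies in the same $R_i$ as $C$, then $x$ is adjacent to $q_i$, so $x,y,y'$ lie in the same component of $G[R_i]$, contradicting $C\neq C'$. Otherwise $x$'s unique $Q$-neighbor is $q_i\neq q_j$, so $x\not\sim q_j$, while $y,y'\sim q_j$ and $y\sim y'$; thus $\{x,y,y',q_j\}$ induces a diamond. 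So each vertex of $C$ has at most one neighbor in $C'$, and symmetrically, so the edges between $C$ and $C'$ form a matching.

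For (3), take $x\in C\subseteq R_i$. Its neighbors are $q_i$, the $|C|-1$ other vertices of $C$, and vertices in other components of $H$. There are no edges from $C$ to another component $C''$ of $G[R_i]$, by maximality of the component $C$. Since $\delta(G)\ge k-1$, $x$ has at least $k-1-|C|$ neighbors outside $C\cup Q$. By the matching property, these lie in distinct components of $H$, each adjacent to $C$ in $H$. Hence $d_H(C)\ge k-1-|C|$.

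For (1), which I expect to be the main point, suppose $H$ is disconnected. Split $V(H)$ into two nonempty sides with no $H$-edges between them, and let $A$ and $B$ be the corresponding unions of vertex sets in $G$. There are then no $G$-edges between $A$ and $B$, and $V(G)=Q\cup A\cup B$. Hence $Q$ separates $A$ from $B$ in $G$, so $Q$ is a clique cutset. This contradicts the fact, noted in Section~\ref{sec:prelim}, that a vertex-critical graph has no clique cutset. One must check that $A$ and $B$ are both nonempty, which holds because the two sides of $V(H)$ are nonempty. So $H$ is connected.
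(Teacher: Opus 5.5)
Your proof is correct and follows essentially the same argument as the paper: Lemma~\ref{lem:diamond} applied to $N(q_i)$ for the clique bound, the diamond $\{x,y,y',q_j\}$ for the matching, the count $k-1\le d_G(x)\le |C|+d_H(C)$ for (3), and the absence of a clique cutset at $Q$ for (1). The only differences are the order in which you present the parts and that you phrase (1) directly as a contradiction rather than through connectivity of $G-Q$; both are cosmetic.
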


\begin{proof}
We prove the three assertions in order.

For (1), the elementary observation above shows that a vertex-critical graph has no clique cutset. Since $Q$ is a clique, $G-Q$ must therefore be connected. The vertices of $H$ are the components into which the sets $R_i$ split, and two such components are adjacent in $H$ exactly when an edge of $G$ joins them. Thus a separation of $H$ would also separate $G-Q$. Hence $H$ is connected.

For (2), let $C$ be a component of $G[R_i]$. Since $R_i\subseteq N(q_i)$, Lemma~\ref{lem:diamond} implies that $C$ is a clique. Every vertex of $C$ is adjacent to $q_i$, so $C\cup\{q_i\}$ is also a clique. Since $Q$ is a maximum clique of order $r$, this gives $|C|\le r-1$.

Now let $C\subseteq R_i$ and $D\subseteq R_j$ be distinct components. If $i=j$, there is no edge between $C$ and $D$ by the definition of a component. Suppose $i\ne j$. We show that no vertex of $C$ can have two neighbors in $D$. Otherwise, let $x\in C$ be adjacent to distinct $y,z\in D$. The component $D$ is a clique, so $y$ and $z$ are adjacent, and because $y,z\in R_j$, both are adjacent to $q_j$. On the other hand, $x\in R_i$ and every vertex outside $Q$ has a unique neighbor in $Q$, so $x$ is not adjacent to $q_j$. Thus $x,y,z,q_j$ induce a diamond. Interchanging $C$ and $D$ gives the same conclusion for vertices of $D$. Therefore every vertex of either component is incident with at most one edge between $C$ and $D$, which is exactly to say that these edges form a matching.

For (3), let $C\subseteq R_i$ and $v\in C$. Inside $C\cup\{q_i\}$, the vertex $v$ has the $|C|-1$ other vertices of $C$ together with $q_i$ as neighbors, so it has exactly $|C|$ neighbors there. It has no neighbor in another component of $G[R_i]$. Every remaining neighbor lies in a component adjacent to $C$ in $H$, and by (2), $v$ has at most one neighbor in each such component. Consequently,
$$
d_G(v)\le |C|+d_H(C).
$$
Since $G$ is $k$-vertex-critical, $d_G(v)\ge k-1$. Rearranging gives $d_H(C)\ge k-1-|C|$, as required.
\end{proof}

Two consequences will be used in the next lemma. Since $d_G(q_i)=r-1+|R_i|$ and every vertex has degree $k-1$ or $k$, each $R_i$ has $k-r$ or $k-r+1$ vertices. Also, suppose equality holds in $d_H(C)\ge k-1-|C|$. In the proof above, we obtained this inequality from $k-1\le d_G(v)\le |C|+d_H(C)$ for every $v\in C$. Equality therefore forces $d_G(v)=k-1$ for every $v\in C$ and also forces $v$ to use every possible neighbor counted by $d_H(C)$. Thus every vertex of $C$ has one neighbor in every component adjacent to $C$ in $H$. Consequently, if adjacent components $C$ and $D$ both satisfy equality, then the matching between them saturates both sides, so $|C|=|D|$ and the matching is perfect.

\begin{lemma}\label{lem:extremal}
We have $|V(H)|\ge k-r+2$. If equality holds, then $k\in\{2r-2,2r-1\}$ and $H\cong K_r$, $K_r-e$, or $K_{r+1}-e$.
\end{lemma}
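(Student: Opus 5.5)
The plan is a counting argument in $H$ built on Lemma~\ref{lem:structure}, together with two remarks made before the statement: each $|R_i|\in\{k-r,k-r+1\}$, and equality in $d_H(C)\ge k-1-|C|$ forces degree $k-1$ and perfect matchings between adjacent tight components. Gallai's Lemma~\ref{lem:gallai} will be the tool for excluding regular configurations. Write $m=|V(H)|$. Since every $R_i$ is nonempty and components inside one $R_i$ are pairwise nonadjacent in $H$, we have $m\ge r$. For every $C\in V(H)$, Lemma~\ref{lem:structure}(2),(3) give $|C|\le r-1$ and hence
$$d_H(C)\ge k-1-|C|\ge k-r.$$
Thus $m\ge k-r+1$ automatically.

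\textbf{Lower bound.} Suppose $m=k-r+1$. Then every vertex of $H$ has degree at least $m-1$, so $H$ is complete. Since components lying in a common $R_i$ are nonadjacent in $H$, each $R_i$ is a single clique, so $m=r$ and $k=2r-1$. Now $|R_i|\ge k-r=r-1\ge|R_i|$, so $|R_i|=r-1$ and equality holds throughout. Hence every vertex outside $Q$ has degree $k-1$. Also $d_G(q_i)=r-1+(r-1)=k-1$. So $G$ is $(k-1)$-regular, and by Lemma~\ref{lem:gallai} every block of $G$ is a clique or an odd cycle. But $G$ has no cut-vertex, since a cut-vertex is a clique cutset, so $G$ is itself a clique or an odd cycle. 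Both are impossible for a noncomplete graph with $r\ge4$. Hence $m\ge k-r+2$.

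\textbf{Equality, first reduction.} Now let $m=k-r+2$. Then $\delta(H)\ge k-r=m-2$, so the complement of $H$ has maximum degree at most $1$. In particular each $R_i$ splits into at most two components, and two components of the same $R_i$ form a non-edge of $H$.

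If $R_i$ splits as $C\cup D$, then $d_H(C)\le m-2$, so $|C|\ge k+1-m=r-1$; hence $|C|=|D|=r-1$ and $|R_i|=2r-2$. From $|R_i|\le k-r+1$ we get $k\ge 3r-3$. On the other hand, an unsplit $R_j$ is one component of size at most $r-1$, so $k-r\le r-1$, that is, $k\le 2r-1$. These are incompatible for $r\ge4$. So either every $R_i$ splits or none does.

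If every $R_i$ splits, then $m=2r$, $k=3r-2$, and all components are tight. The same regularity computation as before (now $d_G(q_i)=r-1+2r-2=k-1$) together with Gallai gives a contradiction. So no $R_i$ splits, $m=r$, and $k=2r-2$. In this case $H$ is $K_r$ minus a matching, and a component of size $r-2$ has full degree in $H$. This already yields $k\in\{2r-2,2r-1\}$ as stated.

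\textbf{Equality, main obstacle.} The step I expect to be hardest is pinning $H$ down to the listed graphs, that is, bounding the number of missing edges. Every non-edge $CD$ of $H$ joins two tight components of size $r-1$. All their vertices then have degree $k-1$, and they are perfectly matched to each adjacent tight component. My first attempt would apply Lemma~\ref{lem:gallai} to the degree-$(k-1)$ vertices. Two non-edges $C_1D_1$ and $C_2D_2$ give four tight $(r-1)$-cliques, pairwise perfectly matched except along the two non-edges, and I would look for a block in this set that is neither a clique nor an odd cycle (for example a $4$-cycle through the matchings). Failing that, I would try to recolor: take a $(k-1)$-coloring of $G-q_i$ and use the rigid matching structure to extend it to $q_i$. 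This should leave only $K_r$ and $K_r-e$. I would then reconcile the result with the list in the statement, noting that the $K_{r+1}-e$ case is exactly the split situation and checking whether it is excluded or must be retained.
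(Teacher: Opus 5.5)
Your lower-bound argument and your reduction of the equality case are correct, and in places sharper than the paper's. You observe that a split $R_i$ has $|R_i|=2r-2>k-r+1$ whenever some $R_j$ is unsplit (which forces $k\le 2r-1$). This shows that the $k=2r-1$, $K_{r+1}-e$ branch never occurs; since the statement is a disjunction, your stronger conclusion still implies it. Your regularity arguments replace the paper's Claims 2 and 3. In fact a $(k-1)$-regular $G$ contradicts $\Delta(G)=k$ directly, so Gallai is not even needed there.

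The genuine gap is the last step. Once you reach $k=2r-2$, $|V(H)|=r$, with $H$ equal to $K_r$ minus a matching, you still have to show that at most one edge is missing. Your proposal only describes attempts (``my first attempt would\dots'', ``failing that, I would try to recolor''), so $H\cong K_r$ or $K_r-e$ is not established. The missing idea is the paper's Claim 1: two \emph{adjacent} tight components of order at least $2$ already give a contradiction. You need neither the four-clique configuration nor any recoloring. Suppose $C_1D_1$ and $C_2D_2$ are two missing edges. They are disjoint because the complement of $H$ is a matching, so $C_1$ and $C_2$ are adjacent in $H$. Both are tight of order $r-1\ge3$. By the remark preceding the lemma, all their vertices have degree $k-1$ and the edges between them form a perfect matching. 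Then $G[C_1\cup C_2]$, two $(r-1)$-cliques joined by a perfect matching, is $2$-connected, so it lies in a single block of the subgraph induced by the degree-$(k-1)$ vertices. That block has at least six vertices and contains a triangle, so it is not an odd cycle. It also contains a vertex of $C_1$ and a vertex of $C_2$ not matched to it, which are nonadjacent, so it is not a clique. This contradicts Lemma~\ref{lem:gallai}. Hence at most one edge of $H$ is missing, and $H\cong K_r$ or $K_r-e$.
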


\setcounter{claim}{0}
\begin{proof}
The main obstruction comes from Gallai's theorem: two adjacent components cannot both attain equality in Lemma~\ref{lem:structure}(3) unless they are single vertices.

\begin{claim}
If adjacent components $C,D\in V(H)$ both satisfy equality in Lemma~\ref{lem:structure}(3), then $|C|=|D|=1$.
\end{claim}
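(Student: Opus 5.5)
The plan is to exhibit an induced $4$-cycle among the vertices of degree $k-1$ and obtain a contradiction with Gallai's theorem (Lemma~\ref{lem:gallai}). Suppose $C,D\in V(H)$ are adjacent and both attain equality in Lemma~\ref{lem:structure}(3). By the remark following Lemma~\ref{lem:structure}, equality has three consequences.
\begin{enumerate}
\item Every vertex of $C\cup D$ has degree exactly $k-1$ in $G$.
\item The edges of $G$ between $C$ and $D$ form a perfect matching.
\item In particular $|C|=|D|=m$.
\end{enumerate}
Assume for contradiction that $m\ge2$.

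Choose two distinct vertices $c_1,c_2\in C$, and let $d_1,d_2\in D$ be their partners in the perfect matching. By Lemma~\ref{lem:structure}(2), $C$ and $D$ are cliques, so $c_1c_2$ and $d_1d_2$ are edges. The matching edges $c_1d_1$ and $c_2d_2$ are also present. Hence $c_1c_2d_2d_1$ is a $4$-cycle.

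This cycle is induced. The edges between $C$ and $D$ form a matching, and $c_1$ is already matched to $d_1$, so $c_1d_2\notin E(G)$. For the same reason $c_2d_1\notin E(G)$. Thus $\{c_1,c_2,d_1,d_2\}$ induces a $C_4$, and all four vertices have degree $k-1$.

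Let $L$ be the subgraph of $G$ induced by the vertices of degree $k-1$. By Lemma~\ref{lem:gallai}, every block of $L$ is a clique or an odd cycle. The induced $C_4$ above is a $2$-connected subgraph of $L$, so it lies inside a single block $B$ of $L$. The block $B$ cannot be a clique, because $c_1d_2\notin E(G)$. Nor can $B$ be an odd cycle: an odd cycle contains no cycle other than itself, and it has odd length, so it contains no $4$-cycle. This contradiction shows that $m=1$, that is, $|C|=|D|=1$.

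The only step needing care is that the $4$-cycle is induced and lies within one block. The first follows from the matching structure in Lemma~\ref{lem:structure}(2), and the second from $2$-connectivity. No step is expected to be a real obstacle.
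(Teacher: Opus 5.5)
Your proof is correct and follows the paper's argument. Both proofs use the equality consequences (every vertex of $C\cup D$ has degree $k-1$, and the matching is perfect) to place a $2$-connected subgraph inside a single block of the degree-$(k-1)$ subgraph, contradicting Lemma~\ref{lem:gallai}. The only difference is cosmetic: you use the induced $C_4$ on $c_1,c_2,d_1,d_2$, which handles every $m\ge2$ at once, whereas the paper uses all of $G[C\cup D]$ and splits into the cases $|C|=2$ (a $C_4$) and $|C|\ge3$ (a triangle together with a nonadjacent pair).
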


\begin{proof}[Proof of Claim 1]
By the observation above, $|C|=|D|$ and the edges between them form a perfect matching. Moreover, every vertex of $C\cup D$ has degree $k-1$ in $G$. Suppose $|C|\ge2$. Then $G[C\cup D]$ consists of two cliques of the same order joined by a perfect matching. This graph is $2$-connected; deleting one vertex still leaves the two cliques connected through the remaining matching edges.

Since $G[C\cup D]$ is $2$-connected, it is contained in a single block of the subgraph induced by the degree-$(k-1)$ vertices. If $|C|=2$, then this block contains a $C_4$, so it is neither a clique nor an odd cycle. If $|C|\ge3$, then the block contains a triangle from one of the two cliques, so it is not an odd cycle, and it also contains two nonadjacent vertices from the two cliques, so it is not a clique. In either case we contradict Lemma~\ref{lem:gallai}. Therefore $|C|=|D|=1$.
\end{proof}

We now prove the lower bound. Lemma~\ref{lem:structure}(3) and $|C|\le r-1$ give $d_H(C)\ge k-r$ for every $C\in V(H)$. Since no vertex of $H$ can have degree larger than $|V(H)|-1$, this already gives $|V(H)|\ge k-r+1$. We show that equality cannot occur.

\begin{claim}
$|V(H)|\ne k-r+1$.
\end{claim}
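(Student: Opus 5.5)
Suppose for contradiction that $|V(H)|=k-r+1$. The plan is to show that then every component satisfies equality in Lemma~\ref{lem:structure}(3). From there, Claim~1 together with connectivity of $H$ will produce a contradiction.

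First, equality everywhere. For each $C\in V(H)$ we have
$$k-r\le k-1-|C|\le d_H(C)\le |V(H)|-1=k-r.$$
Hence all these quantities are equal: $|C|=r-1$ and $d_H(C)=k-r=k-1-|C|$ for every $C$. In particular $H$ is complete, and every component attains equality in Lemma~\ref{lem:structure}(3).

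Next, the contradiction when $H$ has an edge. Since $H$ is connected (Lemma~\ref{lem:structure}(1)), if $|V(H)|\ge2$ there are two adjacent components $C,D$, both attaining equality. Claim~1 then gives $|C|=|D|=1$, so $r-1=1$. This contradicts $r\ge4$.

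It remains to exclude $|V(H)|=1$, which happens exactly when $k=r$. This is impossible, since $r\le k-1$. (Alternatively, $H$ has at least $r\ge4$ vertices, because each $R_i$ is nonempty and contributes at least one component.) So $|V(H)|\ge2$, and the previous step applies.

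The only delicate point is the first step: it must genuinely force equality for every $C$. This needs both the bound $|C|\le r-1$ and the trivial bound $d_H(C)\le|V(H)|-1$ to be tight simultaneously, and the chain above shows they are. I do not expect a real obstacle beyond checking that Claim~1 applies, which needs adjacency in $H$; that adjacency comes from $H$ being complete with at least two vertices.
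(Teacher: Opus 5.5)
Your proof is correct and follows the paper's strategy: force $H$ to be complete with every component attaining equality in Lemma~\ref{lem:structure}(3) and having order $r-1\ge 3$, then contradict Claim~1. The only difference is that you read off $|C|=r-1$ directly from the tight chain $k-r\le k-1-|C|\le d_H(C)\le |V(H)|-1$. The paper instead detours through one component per $R_i$, $|V(H)|=r$, $k=2r-1$, $|R_i|\in\{r-1,r\}$, and maximality of $Q$; your shortcut is valid and a bit cleaner.
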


\begin{proof}[Proof of Claim 2]
Suppose $|V(H)|=k-r+1$. Then the lower bound $d_H(C)\ge k-r$ says that every vertex of $H$ has degree $|V(H)|-1$. Thus $H$ is complete. Components contained in the same $R_i$ are nonadjacent, so a complete $H$ can contain only one component from each $R_i$. Since every $R_i$ is nonempty, there is exactly one component in each $R_i$, and hence $|V(H)|=r$. Combining this with $|V(H)|=k-r+1$ gives $k=2r-1$.

Each $R_i$ has $k-r=r-1$ or $k-r+1=r$ vertices. Since $R_i$ consists of a single component, it is a clique, and $R_i\cup\{q_i\}$ is also a clique. The maximality of $Q$ therefore rules out $|R_i|=r$, so $|R_i|=r-1$ for every $i$. Hence every component has order $r-1$. Since $H$ is complete, every component has degree $r-1=k-r=k-1-(r-1)$ and therefore satisfies equality in Lemma~\ref{lem:structure}(3). Any two components are adjacent and have order $r-1\ge3$, contradicting Claim 1.
\end{proof}

Therefore $|V(H)|\ge k-r+2$. Assume equality. Then Lemma~\ref{lem:structure}(3) gives $d_H(C)\ge k-r=|V(H)|-2$ for every component $C$, so each vertex of $H$ has at most one nonneighbor. Components contained in the same $R_i$ are nonadjacent, and therefore each $R_i$ has at most two components. Thus the rest of the proof is determined by how many of the sets $R_i$ split into two components.

\begin{claim}
At most one $R_i$ has two components.
\end{claim}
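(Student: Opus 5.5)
The plan is to show that every component lying in a split $R_i$ is forced to have order exactly $r-1$ and to attain equality in Lemma~\ref{lem:structure}(3). If two different sets $R_i$ and $R_j$ both split, their components will then contradict Claim~1.

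\emph{Counting.} Let $m$ be the number of indices $i$ for which $R_i$ has two components. We have already seen that each $R_i$ has one or two components, so $|V(H)|=r+m$. Combined with the assumption $|V(H)|=k-r+2$, this gives $k-r=r+m-2$.

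\emph{Split components are large and extremal.} Take a component $C\subseteq R_i$ where $R_i$ has a second component $C'$. Components in the same $R_i$ are nonadjacent, so $C$ misses $C'$ in $H$ and therefore
$$
d_H(C)\le |V(H)|-2=k-r.
$$
Lemma~\ref{lem:structure}(3) gives $d_H(C)\ge k-1-|C|$. Hence $|C|\ge r-1$. By Lemma~\ref{lem:structure}(2) we also have $|C|\le r-1$, so $|C|=r-1$. Substituting back,
$$
d_H(C)=k-r=k-1-|C|,
$$
so $C$ attains equality in Lemma~\ref{lem:structure}(3). Moreover, $C'$ is the unique nonneighbor of $C$ in $H$, because each vertex of $H$ has at most one nonneighbor.

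\emph{Contradiction.} Suppose two distinct indices $i\ne j$ both have split sets $R_i$ and $R_j$. Choose a component $C\subseteq R_i$ and a component $D\subseteq R_j$. The only nonneighbor of $C$ in $H$ lies in $R_i$, so $C$ and $D$ are adjacent in $H$. By the previous step, both $C$ and $D$ attain equality in Lemma~\ref{lem:structure}(3). Claim~1 then forces $|C|=|D|=1$. This contradicts $|C|=r-1\ge3$, which holds since $r\ge4$. Therefore at most one $R_i$ has two components.

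As an alternative route to a contradiction, the degree of $q_i$ could be used. A split $R_i$ would have $2(r-1)$ vertices, while $|R_i|\le k-r+1=r+m-1$. This forces $m\ge r-1$. That bound is not needed once the Claim~1 argument is in place.

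The only delicate point is ensuring the two chosen components are adjacent in $H$. This follows because each vertex of $H$ has at most one nonneighbor, and for split components that nonneighbor is already used up inside their own $R_i$. The rest of the argument is direct bookkeeping with Lemma~\ref{lem:structure} and Claim~1.
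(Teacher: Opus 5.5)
Your proof is correct and follows the paper's argument essentially step for step: a component $C$ of a split $R_i$ is squeezed between $k-1-|C|\le d_H(C)\le |V(H)|-2=k-r$, which forces $|C|=r-1$ and equality in Lemma~\ref{lem:structure}(3). Two components taken from different split sets must then be adjacent, because each has already used its unique nonneighbor inside its own $R_i$, and this contradicts Claim~1. Your side remark via $|R_i|\le k-r+1$ is also valid and says more than you noted: if $m\ge1$ then $r-1\le m\le 1$, which is impossible for $r\ge4$, so in the equality case no $R_i$ splits at all and the $K_{r+1}-e$ alternative is vacuous; this is not needed for the claim itself.
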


\begin{proof}[Proof of Claim 3]
Suppose $R_i$ has two components and let $C$ be one of them. The other component of $R_i$ is not adjacent to $C$, so $d_H(C)\le |V(H)|-2=k-r$. On the other hand, Lemma~\ref{lem:structure}(3) and $|C|\le r-1$ give $d_H(C)\ge k-1-|C|\ge k-r$. Thus all these inequalities are equalities. In particular, $|C|=r-1$ and $d_H(C)=k-1-|C|$. The same argument applies to the other component of $R_i$.

Now suppose two different sets $R_i$ and $R_j$ each had two components. Choose one component from each set. Each chosen component already has one nonneighbor, namely the other component in its own $R_i$ or $R_j$. Since every vertex of $H$ has at most one nonneighbor, the two chosen components must be adjacent. Both have order $r-1\ge3$ and both satisfy equality in Lemma~\ref{lem:structure}(3), contradicting Claim 1.
\end{proof}

Every $R_i$ is nonempty, and Claim 3 shows that at most one has two components. Therefore $H$ has either $r$ or $r+1$ vertices. Since we are assuming $|V(H)|=k-r+2$, we obtain $k\in\{2r-2,2r-1\}$. Thus equality leaves only two possibilities for $k$. We consider them separately and show that the fact that each component has at most one nonneighbor determines $H$ completely.

Suppose first that $k=2r-1$. Then $|V(H)|=r+1$, so exactly one $R_i$ has two components and every other $R_j$ has one. By Claim 3, the two components in $R_i$ have order $r-1$ and satisfy equality in Lemma~\ref{lem:structure}(3). Let $C=R_j$ be the unique component for some $j\ne i$. Since $|R_j|\in\{r-1,r\}$ and $R_j\cup\{q_j\}$ is a clique, maximality of $Q$ again gives $|C|=r-1$. Lemma~\ref{lem:structure}(3) now gives $d_H(C)\ge r-1$. If $d_H(C)=r-1$, then $C$ also satisfies equality in Lemma~\ref{lem:structure}(3). The two components in $R_i$ are nonadjacent to each other, while $C$ has at most one nonneighbor, so $C$ must be adjacent to at least one of them. This contradicts Claim 1. Hence $d_H(C)=r$, so $C$ is adjacent to every other component. This holds for every $j\ne i$. Therefore the only missing edge of $H$ is the edge between the two components in $R_i$, and $H\cong K_{r+1}-e$.

Now suppose $k=2r-2$. Then $|V(H)|=r$, so each of the $r$ nonempty sets $R_i$ has exactly one component. Such a component has order $k-r=r-2$ or $k-r+1=r-1$. Suppose first that some component $C$ has order $r-2$. Lemma~\ref{lem:structure}(3) gives $d_H(C)\ge k-1-|C|=r-1$, so $C$ is adjacent to every other component and satisfies equality. By Claim 1, there cannot be a second component of order $r-2$, because it would also be universal, would also satisfy equality, and would be adjacent to $C$.

If another component $D$ were not adjacent to every vertex of $H$, then $D$ must have order $r-1$. Since every vertex of $H$ has at most one nonneighbor, we would have $d_H(D)=r-2=k-1-(r-1)$, so $D$ would also satisfy equality in Lemma~\ref{lem:structure}(3). But $C$ is universal, so $C$ and $D$ are adjacent, again contradicting Claim 1. Hence every component is universal and $H\cong K_r$.

It remains to consider the case in which every component has order $r-1$. Each vertex of $H$ has at most one nonneighbor, so the missing edges form a matching. For a component $C$, Lemma~\ref{lem:structure}(3) gives $d_H(C)\ge r-2$. Since $H$ has $r$ vertices, a component satisfies equality exactly when its degree is $r-2$, that is, exactly when it is incident with a missing edge. If there were two missing edges, choose one endpoint from each. Because the missing edges form a matching, the two chosen components are adjacent. Both have order $r-1$ and both satisfy equality, contradicting Claim 1. Thus at most one edge is missing, and $H\cong K_r$ or $H\cong K_r-e$.
\end{proof}

\section{Proof of the main result}\label{sec:proof}

\setcounter{claim}{0}
\begin{proof}[Proof of Theorem~\ref{thm:main}]
Let $Q=\{q_1,\ldots,q_r\}$ be a maximum clique. Since $G$ is $k$-vertex-critical, every vertex has degree at least $k-1$. Since $G$ is noncomplete, it cannot contain $K_k$, so $r\le k-1$. Thus every vertex has degree at least $r$.

Fix $q_1\in Q$. Inside $Q$, the vertex $q_1$ has exactly $r-1$ neighbors. Since $d_G(q_1)\ge r$, it must have a neighbor $x$ outside $Q$. By Lemma~\ref{lem:diamond}, a vertex outside a maximum clique has at most one neighbor in that clique, so $q_1$ is the only neighbor of $x$ in $Q$. Therefore, for every $3\le s\le r$, if we choose any $s$-vertex subclique of $Q$ containing $q_1$, then $x$ is adjacent to exactly one vertex of that clique. The chosen clique together with $x$ consequently induces $K_s^+$.

\begin{claim}
$G$ contains an induced $L_t$ for every $3\le t\le r-1$.
\end{claim}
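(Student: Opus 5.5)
The plan is to build the copy of $L_t$ directly from the maximum clique $Q$, extending the construction of $K_s^+$ above by one more vertex. Recall that $L_t$ consists of a clique $K_t$, a vertex $a$ of that clique, and a path $a\,b\,c$ in which $b$ is adjacent only to $a$ in the clique, $c$ has no neighbor in the clique, and $c$ is not adjacent to $a$. I will take $a=q_1$ and $b=x$, where $x$ is the vertex outside $Q$ found above, whose unique neighbor in $Q$ is $q_1$. The remaining tasks are to find a suitable $c$ and then choose the clique.

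\emph{Step 1: finding $c$.} I want a neighbor $y$ of $x$ with $y\notin Q$ and $y\notin N(q_1)$. First count the neighbors of $x$ inside $N[q_1]$.
\begin{itemize}
\item Any neighbor of $x$ in $N(q_1)$ lies in the component of $G[N(q_1)]$ containing $x$. By Lemma~\ref{lem:diamond}, this component is a clique $C$.
\item Then $C\cup\{q_1\}$ is a clique, so $|C|\le r-1$ by maximality of $Q$.
\item Hence $x$ has at most $|C|-1\le r-2$ neighbors in $N(q_1)$, and at most $r-1$ neighbors in $N[q_1]$.
\item Since $x$ has no neighbor in $Q\setminus\{q_1\}$, this accounts for all neighbors of $x$ in $Q\cup N(q_1)$.
\end{itemize}
On the other hand, $d_G(x)\ge k-1\ge r$. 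So $x$ has a neighbor $y$ outside $Q\cup N(q_1)$. In particular $y\ne q_1$, $y\notin Q$, and $yq_1\notin E(G)$.

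\emph{Step 2: choosing the clique.} By Lemma~\ref{lem:diamond}, $y$ has at most one neighbor in $Q$, say $q_j$ if it exists, and $j\ne1$. The set $Q\setminus\{q_j\}$ (or all of $Q$ if $y$ has no neighbor there) contains $q_1$ and has at least $r-1\ge t$ vertices. So I can choose a $t$-subclique $T\subseteq Q$ with $q_1\in T$ that contains no neighbor of $y$.

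\emph{Step 3: checking that the subgraph is induced.} Consider $T\cup\{x,y\}$ and verify each adjacency.
\begin{itemize}
\item $T$ is a clique.
\item $x$ is adjacent to $q_1$ and to no other vertex of $T$.
\item $y$ is adjacent to no vertex of $T$; in particular $y$ is not adjacent to $q_1$.
\item $xy$ is an edge.
\end{itemize}
This is exactly an induced $L_t$, with pendant path $q_1\,x\,y$.

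The only real obstacle is Step 1: showing that $x$ has a neighbor outside $N[q_1]$. This is where the diamond-free structure of $N(q_1)$ and the degree bound $d_G(x)\ge k-1\ge r$ are combined. Note that this argument does not use that $Q$ is dominating.
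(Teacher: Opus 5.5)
Your proof is correct and follows essentially the same route as the paper. You take the neighbor $x$ of $q_1$ outside $Q$, use that the component of $G[N(q_1)]$ containing $x$ is a clique of order at most $r-1$, and combine this with $d_G(x)\ge r$ to get a neighbor $y$ of $x$ nonadjacent to $q_1$; you then choose a $t$-subclique of $Q$ through $q_1$ that avoids the at most one neighbor of $y$ in $Q$. The only cosmetic difference is that you bound the neighbors of $x$ in $N[q_1]$ directly, while the paper finds $y$ outside $C\cup\{q_1\}$ and then notes $yq_1\notin E(G)$, since otherwise $y$ would lie in $C$.
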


\begin{proof}[Proof of Claim 1]
Let $C$ be the component of $G[N(q_1)]$ containing $x$. By Lemma~\ref{lem:diamond}, $C$ is a clique. Since every vertex of $C$ is adjacent to $q_1$, the set $C\cup\{q_1\}$ is also a clique, and maximality of $Q$ gives $|C|\le r-1$.

The vertex $x$ is adjacent to every other vertex of $C$ and to $q_1$, so it has exactly $|C|$ neighbors in $C\cup\{q_1\}$. But $d_G(x)\ge r>|C|$. Hence $x$ has a neighbor $y$ outside $C\cup\{q_1\}$. We must have $yq_1\notin E(G)$: if $y$ were adjacent to $q_1$, then $x$ and $y$ would be adjacent vertices of $G[N(q_1)]$ and hence would lie in the same component $C$.

By Lemma~\ref{lem:diamond}, $y$ has at most one neighbor in $Q$. Since $y$ is not adjacent to $q_1$, any such neighbor is different from $q_1$. Fix $3\le t\le r-1$. Because we are choosing only $t$ vertices from the $r$ vertices of $Q$, we can choose a $t$-vertex subclique containing $q_1$ and avoiding the possible neighbor of $y$. On these vertices, $x$ is adjacent only to $q_1$, while $y$ is adjacent to $x$ and to none of the clique vertices. Thus the chosen clique together with $x$ and $y$ induces $L_t$.
\end{proof}

It remains to prove the final assertion. Suppose a maximum clique $Q$ is not dominating. Since a vertex-critical graph is connected, there is a vertex having no neighbor in $Q$. Choose such a vertex $y$ whose distance from $Q$ is minimum. This distance is at least two. We show that it cannot be larger than two. Suppose instead that a shortest path from $Q$ to $y$ begins $q_i-x-z-\cdots-y$. If $z$ has a neighbor in $Q$, then the portion of the path from $z$ to $y$, preceded by that neighbor, gives a shorter path from $Q$ to $y$. If $z$ has no neighbor in $Q$, then $z$ is a vertex with no neighbor in $Q$ that is closer to $Q$ than $y$, contradicting the choice of $y$. Hence the distance is exactly two, and a shortest path has the form $q_i-x-y$, where $q_i\in Q$ and $x,y\notin Q$.

The vertex $x$ is adjacent to $q_i$, and by Lemma~\ref{lem:diamond} it has no other neighbor in $Q$. By the choice of $y$, the vertex $y$ has no neighbor in $Q$. Therefore the only edges from $\{x,y\}$ to $Q$ are $xq_i$, together with the edge $xy$ of the path. Hence $Q\cup\{x,y\}$ induces $L_r$. Thus, if $G$ is $L_r$-free, every maximum clique is dominating.
\end{proof}

\begin{proof}[Proof of Theorem~\ref{thm:BK}]
Suppose the theorem is false. The two graph classes in the statement are hereditary, that is deleting vertices cannot create an induced diamond, $L_4$, or $K_5^+$. By Lemma~\ref{lem:reduction}, it is therefore enough to consider a counterexample $G_0$ in the same class with $\Delta(G_0)=9$. Because $G_0$ is a counterexample, we have 
$
\chi(G_0)>\max\{8,\omega(G_0)\}.
$
In particular, $\chi(G_0)\ge9$ and $\chi(G_0)>\omega(G_0)$. Theorem~\ref{thm:brooks} gives $\chi(G_0)\le\max\{\omega(G_0),9\}$. Since $\chi(G_0)>\omega(G_0)$, this forces $\chi(G_0)\le9$, and therefore $\chi(G_0)=9$. The same counterexample inequality then gives $\omega(G_0)\le8$.

Choose a $9$-vertex-critical induced subgraph $G$ of $G_0$. Since the class is hereditary, $G$ remains diamond-free and remains $L_4$-free or $K_5^+$-free. We have $\chi(G)=9$, $\omega(G)\le8$, and $\Delta(G)\le9$. If $\Delta(G)\le8$, then Theorem~\ref{thm:brooks} would give $\chi(G)\le\max\{\omega(G),\Delta(G)\}\le8$, contradicting $\chi(G)=9$. Hence $\Delta(G)=9$.

We may now apply Theorem~\ref{thm:BKclique}. Since $\chi(G)=\Delta(G)=9$, the graph $G$ contains $K_5$, and therefore $\omega(G)\ge5$. Let $r=\omega(G)$. Then $5\le r\le8$, so Theorem~\ref{thm:main} applies. Since $5\le r$, it gives an induced $K_5^+$; since $4\le r-1$, it also gives an induced $L_4$. Thus $G$ contains both forbidden configurations. This contradicts the assumption that $G$ is either $(\mathrm{diamond},L_4)$-free or $(\mathrm{diamond},K_5^+)$-free.
\end{proof}

\end{document}